\date{\today}
\newtheorem{theorem}{Теорема}
\newtheorem{proposition}[theorem]{Твердження}
\newtheorem{corollary}[theorem]{Наслiдок}
\newtheorem{lemma}[theorem]{Лема}
\theoremstyle{definition}
\begin{document}

\title[Напiвгрупа зiркових часткових гомеоморфiзмiв ...]{Напiвгрупа зiркових часткових гомеоморфiзмiв скiнченновимiрного евклiдового простору}

\author[Олег~Гутік, Катерина Мельник]{Олег~Гутік, Катерина Мельник}
\address{Механіко-математичний факультет, Львівський національний університет ім. Івана Франка, Університецька 1, Львів, 79000, Україна}
\email{oleg.gutik@lnu.edu.ua,
ovgutik@yahoo.com, chepil.kate@gmail.com}

\keywords{Напівгрупа перетворень, інверсна напівгрупа, частковий гомеоморфізм, зірка, відношення Ґріна, конгруенція}

\subjclass[2010]{20M15,  20M50, 18B40.}

\begin{abstract}
Введено поняття зiркового часткового гомеоморфiзму скiнченновимiрного евклiдового простору $\mathbb{R}^n$ і досліджується структура напівгрупи $\mathbf{PStH}_{\mathbb{R}^n}$ зiркових часткових гомеоморфiзмiв простору $\mathbb{R}^n$. Описано струк\-ту\-ру ідемпотентів напівгрупи $\mathbf{PStH}_{\mathbb{R}^n}$ і відношення Ґріна на $\mathbf{PStH}_{\mathbb{R}^n}$. Зокрема доведено, що $\mathbf{PStH}_{\mathbb{R}^n}$ --- біпроста інверсна напівгрупа, а також, що кожна неодинична конгруенція на $\mathbf{PStH}_{\mathbb{R}^n}$ є груповою.

\bigskip

Oleg Gutik, Kateryna Melnyk. \emph{The semigroup of star partial homeomorphisms of a finite deminsional Euclidean space.}

In the paper the notion of a star partial homeomorphism of a finite dimensional Euclidean space $\mathbb{R}^n$ is introduced. We describe the structure of the semigroup $\mathbf{PStH}_{\mathbb{R}^n}$ of star partial homeomorphisms of the space $\mathbb{R}^n.$ The structure of the band of $\mathbf{PStH}_{\mathbb{R}^n}$ and Green's relations on $\mathbf{PStH}_{\mathbb{R}^n}$ are described. We show that $\mathbf{PStH}_{\mathbb{R}^n}$ is a bisimple inverse semigroup and every non-unit congruence on $\mathbf{PStH}_{\mathbb{R}^n}$ is a group congruence.
\end{abstract}

\maketitle


Ми користуватимемося термінологією з \cite{CliffordPreston1961-1967, Engelking1989, Lawson1998, Petrich1984}.

Надалі будемо вважати, що на $n$-вимірному евклідовому просторі $\mathbb{R}^n$ визначена звичайна (евклідова) топологія.

Якщо визначене часткове відображення $\alpha\colon X\rightharpoonup Y$ з множини $X$ у множину $Y$, то через $\operatorname{dom}\alpha$ i $\operatorname{ran}\alpha$ будемо позначати його \emph{область визначення} та \emph{область значень}, відповідно, а через $(x)\alpha$ і $(A)\alpha$ --- образи елемента $x\in\operatorname{dom}\alpha$ та підмножини $A\subseteq\operatorname{dom}\alpha$ при частковому відображенні $\alpha$, відповідно.

Часткове відображення (перетворення) $\alpha\colon X\rightharpoonup X$ топологічного простору $X$ називається \emph{частковим гомеоморфізмом} простору $\mathbb{R}$, якщо його звуження $\alpha|_{\operatorname{dom}\alpha} \colon \operatorname{dom}\alpha\rightarrow \operatorname{ran}\alpha$ є гомеоморфізмом.

Якщо $S$~--- напівгрупа, то її підмножина ідемпотентів позначається через $E(S)$.  Напівгрупа $S$ називається \emph{інверсною}, якщо для довільного її елемента $x$ існує єдиний елемент $x^{-1}\in S$ такий, що $xx^{-1}x=x$ та $x^{-1}xx^{-1}=x^{-1}$. В інверсній напівгрупі $S$ вище означений елемент $x^{-1}$ називається \emph{інверсним до} $x$. \emph{В'язка}~--- це напівгрупа ідемпотентів, а \emph{напівґратка}~--- це комутативна в'язка. 

Відношення еквівалентності $\mathfrak{K}$ на напівгрупі $S$ називається \emph{конгруенцією}, якщо для елементів $a$ i $b$ напівгрупи $S$ з того, що виконується умова $(a,b)\in\mathfrak{K}$ випливає, що $(ca,cb), (ad,bd) \in\mathfrak{K}$, для всіх $c,d\in S$. Відношення $(a,b)\in\mathfrak{K}$ ми також будемо записувати $a\mathfrak{K}b$, і в цьому випадку будемо говорити, що \emph{елементи $a$ i $b$ є $\mathfrak{K}$-еквівалентними}. На кожній напівгрупі $S$ існують такі конгруенції: \emph{універсальна} $\mathfrak{U}_S=S\times S$ та \emph{одинична} (\emph{діагональ}) $\Delta_S=\{(s,s)\colon s\in S\}$. Такі конгруенції називаються \emph{тривіальними}. Кожен двобічний ідеал $I$ напівгрупи $S$ породжує на ній конгруенцію Ріса: $\mathfrak{K}_I=(I\times I)\cup\Delta_S$. Конгруенція $\mathfrak{K}$ на напівгрупі $S$ називається \emph{груповою}, якщо фактор-напівгрупа $S/\mathfrak{K}$ є групою.

Якщо $S$~--- напівгрупа, то на $E(S)$ визначено частковий порядок
\begin{equation*}
    e\leqslant f \quad \hbox{ тоді і лише тоді, коли } \quad ef=fe=e.
\end{equation*}
Так означений частковий порядок на $E(S)$ називається \emph{природним}.

Означимо відношення $\leqslant$ на інверсній напівгрупі $S$ так:
\begin{equation*}
    s\leqslant t \qquad \hbox{тоді і лише тоді, коли}\qquad s=te,
\end{equation*}
для деякого ідемпотента $e\in S$. Так означений частковий порядок називається \emph{природним част\-ковим порядком} на інверсній напівгрупі $S$~\cite{Lawson1998}. Очевидно, що звуження природного часткового порядку $\leqslant$ на інверсній напівгрупі $S$ на її в'язку $E(S)$ є природним частковим порядком на $E(S)$.

Нагадаємо (див., наприклад \cite[\S1.12]{CliffordPreston1961-1967}), що \emph{біциклічною напівгрупою} (або \emph{біцик\-лічним моноїдом}) ${\mathscr{C}}(p,q)$ називається напівгрупа з одиницею, породжена дво\-еле\-мент\-ною множиною $\{p,q\}$ і визначена одним визначальним співвідношенням $pq=1$. Біциклічна напівгрупа відіграє важливу роль у теорії
напівгруп. Зокрема, класична теорема Олафа Андерсена \cite{Andersen-1952} стверджує, що ($0$-)проста напівгрупа з ідемпотентом є цілком ($0$-)простою тоді і лише тоді, коли вона не містить ізоморфну копію біциклічної напівгрупи.

Через $\mathscr{I}_\lambda$ позначимо множину всіх часткових взаємнооднозначних перетворень кардинала $\lambda$ разом з такою напівгруповою операцією
\begin{equation*}
    x(\alpha\beta)=(x\alpha)\beta \quad \mbox{якщо} \quad
    x\in\operatorname{dom}(\alpha\beta)=\{
    y\in\operatorname{dom}\alpha\colon
    y\alpha\in\operatorname{dom}\beta\}, \qquad \mbox{для} \;
    \alpha,\beta\in\mathscr{I}_\lambda.
\end{equation*}
Напівгрупа $\mathscr{I}_\lambda$ називається  \emph{симетричною інверсною напівгрупою} або \emph{симетричним інверсним моноїдом} над кардиналом $\lambda$~(див.  \cite{CliffordPreston1961-1967}). Симетрична інверсна напівгрупа введена В. В. Вагнером у працях~\cite{Wagner-1952, Wagner-1952a} і вона відіграє важливу роль у теорії напівгруп.

Якщо $A$~--- підмножина евклідового простору $\mathbb{R}^n$, то через $\operatorname{int}A$  позначатимемо внутрішність  множини $A$ в просторі $\mathbb{R}^n$. Ми позначатимемо одиничну сферу та замкнену кулю радіуса $r>0$  в $\mathbb{R}^n$ через $\mathbb{S}^{n-1}$ i $\mathbf{B}_r$, відповідно.

Для довільних двох точок $x,y\in\mathbb{R}^n$ через $[x,y]$ позначатимемо відрізок в $\mathbb{R}^n$, який з'єднує точки $x,y$, тобто $[x,y]=\left\{z\in \mathbb{R}^n\colon \overrightarrow{xz}=\alpha\cdot\overrightarrow{xy}, 0\leqslant\alpha\leqslant 1\right\}$.

Компактна опукла підмножина в $\mathbb{R}^n$ з непорожньою внутрішністю називається \emph{опуклим тілом}. Підмножина $L\subseteq \mathbb{R}^n$ називається \emph{зіркою} в початку $\mathbf{0}$, якщо для довільної точки $x\in L$ відрізок $[\mathbf{0},x]$ міститься в $L$. Якщо $L$ є компактною підмножиною, яка є зіркою в початку $\mathbf{0}$, то її радіальна функція $\rho_L$ визначається для всіх $u\in \mathbb{S}^{n-1}$ так, що промінь відкладений у початку $\mathbf{0}$ паралельно до $u$ перетинає $L$, за формулою
\begin{equation*}
  (u)\rho_L=\max\left\{ c\geqslant 0\colon cx\in L\right\}.
\end{equation*}
Зауважимо, що в \cite{Gardner-Volcic-1994}, у якій введено всі вище означенні поняття, не припускається, що початок $\mathbf{0}$ належить зірці $L$. Надалі, ще крім того, ми будемо вважати, що $\mathbf{0}\in\operatorname{int}L$ для довільної зірки $L\subseteq \mathbb{R}^n$, що еквівалентно умові $\rho_L(u)\neq 0$ для всіх $u\in \mathbb{S}^{n-1}$, а також припускатимемо, що радіальна функція $\rho_L\colon \mathbb{S}^{n-1}\to L$ є неперервною.

Нехай $L_1$ i $L_2$~--- зірки в початку $\mathbf{0}$. Тоді гомеоморфізм $\alpha\colon L_1\to L_2$ називається \emph{зірковим}, якщо $(\mathbf{0})\alpha=\mathbf{0}$ і $([\mathbf{0},x])\alpha=[\mathbf{0},(x)\alpha]$ для довільного відрізка $[\mathbf{0},x]\subseteq L_1$. Надалі вважатимемо, що всі зірки $L\subseteq \mathbb{R}^n$ є в початку $\mathbf{0}$ і під \emph{частковими зірковими гомеоморфізмами} простору $\mathbb{R}^n$ будемо розуміти гомеоморфізми між зірками в $\mathbb{R}^n$.

З означення зіркового гомеоморфізму випливає твердження~\ref{proposition-1.0}.

\begin{proposition}\label{proposition-1.0}
Композиція двох часткових зіркових гомеоморфізмів простору $\mathbb{R}^n$ і обернене часткове відображення до часткового зіркового гомеоморфізму є част\-ковими зірковими гомеоморфізмами простору $\mathbb{R}^n$.
\end{proposition}

\begin{proposition}\label{proposition-1.1}
Довільні дві зірки в $\mathbb{R}^n$ є зірково гомеоморфними.
\end{proposition}

\begin{proof}
Доведемо, що довільна зірка $L$ зірково гомеоморфна одиничній кулі $\mathbf{B}_1$ в $\mathbb{R}^n$. Означимо відображення $\alpha_L\colon L\to \mathbf{B}_1$ наступним чином.  Нехай $\rho_L\colon \mathbb{S}^{n-1}\to L$~--- радіальна функ\-ція зірки $L$. Для довільного $x\in \mathbf{B}_1$ покладемо  $(x)r$~--- точка на одиничній сфері $\mathbb{S}^{n-1}$ така, що $x\in [\mathbf{0},(x)r]$. Тоді з неперервності радіальної функції $\rho_L\colon \mathbb{S}^{n-1}\to L$ випливає, що відображення $\beta_L\colon\mathbf{B}_1\to L$, означене за формулою
\begin{equation*}
(x)\beta_L=
\left\{
  \begin{array}{ll}
    \mathbf{0}, & \hbox{якщо~} x=\mathbf{0}; \\
    x\cdot ((x)r)\rho_L, & \hbox{в іншому випадку},
  \end{array}
\right.
\end{equation*}
є бієктивним і неперервним, і оскільки $\mathbf{B}_1$~--- компактний підпростір в $\mathbb{R}^n$, то $\beta_L$ є зірковим гомеоморфіз\-мом. Покладемо $\alpha_L\colon L\to \mathbf{B}_1$~--- обернене відображення до $\beta_L$. Очевидно, що ві\-доб\-ра\-жен\-ня $\alpha_L$ є зірковим гомеоморфіз\-мом.
\end{proof}

З твердження 14.1.7 \cite{Moszynska-20055} і з міркувань викладених в \cite[\S1.4, с.~29--30]{Engelking1989} випливає твердження~\ref{proposition-1.2}.

\begin{proposition}\label{proposition-1.2}
Перетин двох зірок є зіркою в $\mathbb{R}^n$.
\end{proposition}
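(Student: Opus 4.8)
The plan is to verify, straight from the definition of a star, that the intersection $L=L_{1}\cap L_{2}$ of two stars $L_{1},L_{2}\subseteq\mathbb{R}^{n}$ with centre $\mathbf{0}$ is again a star, i.e.\ to check each of the four defining clauses for $L$: compactness, star-shapedness with respect to $\mathbf{0}$, $\mathbf{0}\in\operatorname{int}L$, and continuity of the radial function $\rho_{L}$.

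The first three clauses are straightforward. Since $\mathbb{R}^{n}$ is Hausdorff, the compact sets $L_{1}$ and $L_{2}$ are closed, so $L$ is a closed subset of the compact set $L_{1}$ and hence compact. If $x\in L$, then $[\mathbf{0},x]\subseteq L_{1}$ and $[\mathbf{0},x]\subseteq L_{2}$, so $[\mathbf{0},x]\subseteq L$; thus $L$ is star-shaped with respect to $\mathbf{0}$. Finally, by the elementary identity $\operatorname{int}(L_{1}\cap L_{2})=\operatorname{int}L_{1}\cap\operatorname{int}L_{2}$ (see \cite[\S1.4]{Engelking1989}) together with the requirements $\mathbf{0}\in\operatorname{int}L_{1}$ and $\mathbf{0}\in\operatorname{int}L_{2}$ built into the notion of a star, we obtain $\mathbf{0}\in\operatorname{int}L$, and in particular $\operatorname{int}L\neq\varnothing$.

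The remaining, and only slightly delicate, point is continuity of $\rho_{L}$. I would first record an auxiliary observation: for any compact star $M$ with centre $\mathbf{0}$ and any $u\in\mathbb{S}^{n-1}$ the set $T_{M}(u)=\{c\geqslant 0\colon cu\in M\}$ is exactly the segment $[0,(u)\rho_{M}]$. Indeed, it contains $0$; it is downward closed, since if $cu\in M$ and $0\leqslant c'\leqslant c$ then $c'u\in[\mathbf{0},cu]\subseteq M$; it is bounded because $M$ is bounded; and it is closed because $M$ is closed and $c\mapsto cu$ is continuous. A nonempty, bounded, closed, downward-closed subset of $[0,\infty)$ is a segment $[0,s]$, and $s$ is by definition $(u)\rho_{M}$. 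Applying this to $L_{1}$, $L_{2}$ and $L$ and noting $T_{L}(u)=T_{L_{1}}(u)\cap T_{L_{2}}(u)$, we get
\begin{equation*}
(u)\rho_{L}=\min\bigl\{(u)\rho_{L_{1}},\,(u)\rho_{L_{2}}\bigr\}\qquad\text{for every }u\in\mathbb{S}^{n-1}.
\end{equation*}
Since $\rho_{L_{1}}$ and $\rho_{L_{2}}$ are continuous (and positive) on $\mathbb{S}^{n-1}$, their pointwise minimum $\rho_{L}$ is continuous (and positive) as well; this last step is exactly the content of \cite[Theorem~14.1.7]{Moszynska-20055}, so it may alternatively just be cited, and then the whole proof reduces to the bookkeeping of the preceding paragraph.

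I do not expect a genuine obstacle: the proof is a routine assembly of standard facts about compact sets, interior operators, and minima of continuous functions. The only spot that deserves a moment's care is the auxiliary identification of $T_{M}(u)$ with a closed bounded segment, since the key formula $\rho_{L}=\min\{\rho_{L_{1}},\rho_{L_{2}}\}$ — and hence the continuity of $\rho_{L}$, which is what promotes the topological intersection to an honest star — rests on it.
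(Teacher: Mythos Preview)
Your proof is correct and follows essentially the same route as the paper, which does not give an argument but simply records that the proposition follows from \cite[Proposition~14.1.7]{Moszynska-20055} together with elementary facts from \cite[\S1.4]{Engelking1989}. You have unpacked exactly those two citations into an explicit verification of each clause in the definition of a star, with the formula $(u)\rho_{L}=\min\{(u)\rho_{L_{1}},(u)\rho_{L_{2}}\}$ being the content of the Moszy\'nska reference.
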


Через $\mathbf{PStH}_{\mathbb{R}^n}$ позначимо множину всіх часткових зіркових гомеоморфізмів простору $\mathbb{R}^n$ з операцією композицією відображень.

З тверджень \ref{proposition-1.1} і \ref{proposition-1.2} випливає твердження~\ref{proposition-1.3}.

\begin{proposition}\label{proposition-1.3}
$\mathbf{PStH}_{\mathbb{R}^n}$ --- інверсна піднапівгрупа симетричного інверсного моноїда $\mathscr{I}_{\mathfrak{c}}$.
\end{proposition}


Дослідження автоморфізмів і груп автоморфізмів многовидів малої розмір\-нос\-ті фор\-му\-ють широку область сучасної математики, яка дуже швидко розвивається та роз\-та\-шо\-ва\-на на стику топології, алгебри й теорії динамічних систем. Ця область охоплює вивчення груп гомеоморфізмів прямої та кола, теорію автоморфізмів поверхонь і теорію груп класів відображень.

Автоморфізмам і групам автоморфізмів многовидів розмірності 1 і 2 присвячено фундаментальні праці Клейна, Фрике, Пуанкаре, Гурвіца, Дена, Данжуа, Алек\-сандера, Нільсена, Артіна, Керек'ярто, А. А. Маркова.
Сучасні дослідження груп гомеоморфізмів прямої викладено в оглядах Бекларяна \cite{Beklaryan-2004, Beklaryan-2015} та її застосування в теорії динамічних систем у монографії \cite{Katok-Hasselblatt-1995}.

Основні результати теорії напівгруп перетворень отримані в період
50-70-х років минулого століття, викладені в оглядах Меггіла
\cite{Magill1975} та Глускіна, Шайна, Шнепермана та Ярокера
\cite{Gluskin1977a}. У цьому напрямі працювали такі відомі
математики: Гауі, Гельфанд, Глускін, Грін, Енгелькінг, Кліффорд,
Ляпін, Меггіл, Престон, Саббах, Серпінський, Сушкевич, Улам,  Шайн,
Шнеперман, Шутов, Ярокер. На думку Меггіла (див. \cite{Magill1975}) Теорія напівгруп
неперервних перетворень топологічних просторів бере свій початок з
праць Глускіна \cite{Gluskin1959, Gluskin1959a, Gluskin1959b,
Gluskin1959c, Gluskin1960, Gluskin1960a}. В основному ці праці
Глускіна присвячені описанню структури напівгрупи $S(I)$ неперервних
перетворень одиничного відрізка $I$, а також описанню піднапівгруп
напівгрупи $S(X)$ неперервних перетворень топологічного прос\-то\-ру
$X$. Напівгрупу $S(I)$ неперервних перетворень одиничного відрізка
також дос\-лід\-жу\-вав Шутов у працях \cite{Shutov1963a,
Shutov1963b}, де він описав мак\-си\-мальну власну конгруенцію на
$S(I)$.

Напівгрупу $S(I)$ також досліджували в \cite{Insaridze, Shneperman1962,
Shneperman1962a, Shneperman1963, Shneperman1965, Shneperman1966,
Cezus, Jarnik, Mioduszewski, Rosicky1974, Rosicky1974a}, зокрема в
 \cite{Gluskin1959, Insaridze}  описано конгруенц-прості
піднапівгрупи в $S(I)$. Шне\-пер\-ман \cite{Shneperman1965a} та Уарндоф
\cite{Warndof} довели, що одиничний відрізок визначається
напівгрупою неперервних перетворень. Інші класи топологічних
просторів, що визначаються своїми напівгрупами неперервних
перетворень, описав Уарндоф у \cite{Warndof} і Росіцкий у
\cite{Rosicky1974, Rosicky1974a}. Зокрема такими є: локально зв'язні
сепарабельні метричні континууми, локально евклідові гаусдорфові
простори, нульвимірні метричні простори, $CW$-комплекси та ін.
Також О'Рейлі в праці \cite{Reilly} довела, що кожен гаусдорфовий
простір $X$ визначається напівгрупою усіх компактних відношень на
$X$.

Зауважимо, що група гомеоморфізмів дійсної прямої ізоморфна групі го\-мео\-мор\-фіз\-мів одиничного відрізка (інтервалу). Таким чином виникає задача: \emph{описати струк\-ту\-ру напівгрупи част\-кових
гомеоморфізмів топологічного простору $X$}, а в частковому
ви\-пад\-ку одиничного відрізка, чи дійсної прямої. Однією з останніх робіт з цієї тематики є праця Чучмана \cite{Chuchman2011ADM}, в якій описано структуру напівгрупи замкнених зв'язних част\-ко\-вих гомеоморфізмів
одиничного відрізка з однією нерухомою точкою. Також у \cite{Gutik-Melnyk-2015} описана структура напівгрупи $\mathscr{P\!\!H}^+_{\!\!\operatorname{\textsf{cf}}}\!(\mathbb{R})$ усіх монотонних коскінченних часткових гомеоморфізмів звичайної дійсної прямої $\mathbb{R}$. Зокрема, в \cite{Gutik-Melnyk-2015} доведено, що фактор-напівгрупа $\mathscr{P\!\!H}^+_{\!\!\operatorname{\textsf{cf}}}\!(\mathbb{R})/\mathfrak{C}_{\textsf{mg}}$ за найменшою груповою конгруенцією $\mathfrak{C}_{\textsf{mg}}$ ізоморфна групі $\mathscr{H}^+\!(\mathbb{R})$ усіх гомеоморфізмів, що зберігають орієнтацію простору $\mathbb{R}$, а також, що напівгрупа $\mathscr{P\!\!H}^+_{\!\!\operatorname{\textsf{cf}}}\!(\mathbb{R})$ ізоморфна напівпрямому добутку $\mathscr{H}^+\!(\mathbb{R})\ltimes_\mathfrak{h}\mathscr{P}_{\!\infty}(\mathbb{R})$ вільної напівґратки з одиницею $(\mathscr{P}_{\!\infty}(\mathbb{R}),\cup)$ з групою $\mathscr{H}^+\!(\mathbb{R})$.

Нагадаємо \cite{Chuchman2011ADM}, що часткове перетворення  $\alpha\colon X\rightharpoonup X$ топологічного простору $X$ називається \emph{замкненим зв'язним част\-ко\-вим гомеоморфізмом}, якщо його звуження $\alpha\colon \operatorname{dom}\alpha\to \operatorname{ran}\alpha$ є гомеоморфізмом і $\operatorname{dom}\alpha$ та $\operatorname{ran}\alpha$ --- замкнені зв'язні підмножини в $X$.
Очевидно, що кожен замкнений зв'язний част\-ко\-вий гомеоморфізм одиничного відрізка, чи прямої, з однією нерухомою точкою можна розглядати як зірковий частковий гомеоморфізм цього простору. Тому природно виникає задача про можливість поширення результатів, отриманих у праці \cite{Chuchman2011ADM} на вищі виміри.

\medskip

У цій праці досліджується структура напівгрупи $\mathbf{PStH}_{\mathbb{R}^n}$ зiркових часткових гомеоморфiзмiв скiнченновимiрного евклiдового простору $\mathbb{R}^n$. Описана структура ідемпотентів напівгрупи $\mathbf{PStH}_{\mathbb{R}^n}$ і відношення Ґріна на $\mathbf{PStH}_{\mathbb{R}^n}$. Зокрема доведено, що $\mathbf{PStH}_{\mathbb{R}^n}$ -- біпроста інверсна напівгрупа, а також, що кожна неодинична конгруенція на $\mathbf{PStH}_{\mathbb{R}^n}$ є груповою.


\medskip

Надалі в статті через $\textbf{St}_0(\mathbb{R}^n)$ позначатимемо множину усіх зірок в початку $\textbf{0}$. 

З означення напівгрупи $\mathbf{PStH}_{\mathbb{R}^n}$ і твердження \ref{proposition-1.3} випливає твердження~\ref{proposition-3.1}.

\begin{proposition}\label{proposition-3.1}
\begin{itemize}
  \item[$(i)$] Елемент $\alpha$ є ідемпотентом напівгрупи $\mathbf{PStH}_{\mathbb{R}^n}$ тоді і лише тоді, коли $\alpha\colon S\to S$~-- тотожне відображення для деякої зірки $S\in\textbf{St}_0(\mathbb{R}^n)$.
  \item[$(ii)$] В'язка $E(\mathbf{PStH}_{\mathbb{R}^n})$ ізоморфна напівґратці $(\textbf{St}_0(\mathbb{R}^n),\cap)$.
  \item[$(iii)$] $\varepsilon\leqslant\iota$ в $E(\mathbf{PStH}_{\mathbb{R}^n})$ тоді і лише тоді, коли $\operatorname{dom}\varepsilon\subseteq\operatorname{dom}\iota$.
  \item[$(iv)$] $\alpha\leqslant\beta$ в $\mathbf{PStH}_{\mathbb{R}^n}$ тоді і лише тоді, коли $\beta|_{\operatorname{dom}\alpha}=\alpha$.
\end{itemize}
\end{proposition}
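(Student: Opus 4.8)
The plan is to reduce everything to Proposition~\ref{proposition-1.3}, which realizes $\mathbf{PStH}_{\mathbb{R}^n}$ as an inverse subsemigroup of the symmetric inverse semigroup $\mathscr{I}_{\mathfrak{c}}$, together with the familiar descriptions of the idempotents and of the natural partial order of a symmetric inverse semigroup. For~$(i)$ I would use that $E(\mathbf{PStH}_{\mathbb{R}^n})=E(\mathscr{I}_{\mathfrak{c}})\cap\mathbf{PStH}_{\mathbb{R}^n}$ and that $E(\mathscr{I}_{\mathfrak{c}})$ consists precisely of the partial identity maps $\mathrm{id}_A$, $A\subseteq\mathbb{R}^n$. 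Thus $\alpha\in E(\mathbf{PStH}_{\mathbb{R}^n})$ exactly when $\alpha=\mathrm{id}_A$ with $\mathrm{id}_A\in\mathbf{PStH}_{\mathbb{R}^n}$; and $\mathrm{id}_A$ is a star partial homeomorphism if and only if $A=\operatorname{dom}\mathrm{id}_A=\operatorname{ran}\mathrm{id}_A$ is a star with vertex $\mathbf{0}$, since the conditions $(\mathbf{0})\mathrm{id}_A=\mathbf{0}$ and $([\mathbf{0},x])\mathrm{id}_A=[\mathbf{0},(x)\mathrm{id}_A]$ hold automatically. Conversely, for every $S\in\textbf{St}_0(\mathbb{R}^n)$ the map $\mathrm{id}_S\colon S\to S$ is a star homeomorphism and an idempotent, which is the required form.

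For~$(ii)$, by~$(i)$ the assignment $S\mapsto\mathrm{id}_S$ is a bijection of $\textbf{St}_0(\mathbb{R}^n)$ onto $E(\mathbf{PStH}_{\mathbb{R}^n})$, and in $\mathscr{I}_{\mathfrak{c}}$ one has $\mathrm{id}_{S_1}\mathrm{id}_{S_2}=\mathrm{id}_{S_1\cap S_2}$; so the one thing that really requires an argument is that $\textbf{St}_0(\mathbb{R}^n)$ is closed under intersection. For $S_1,S_2\in\textbf{St}_0(\mathbb{R}^n)$ the set $S_1\cap S_2$ is bounded, closed and star-shaped with respect to $\mathbf{0}$, and $\mathbf{0}\in\operatorname{int}S_1\cap\operatorname{int}S_2=\operatorname{int}(S_1\cap S_2)$; moreover $cu\in S_i$ for $c\geqslant0$ precisely when $c\leqslant(u)\rho_{S_i}$, so $(u)\rho_{S_1\cap S_2}=\min\{(u)\rho_{S_1},(u)\rho_{S_2}\}$, which is positive and continuous on $\mathbb{S}^{n-1}$; hence $S_1\cap S_2\in\textbf{St}_0(\mathbb{R}^n)$ and $S\mapsto\mathrm{id}_S$ is an isomorphism of the semilattice $(\textbf{St}_0(\mathbb{R}^n),\cap)$ onto $E(\mathbf{PStH}_{\mathbb{R}^n})$. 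I expect this closure statement --- in particular the continuity of the pointwise minimum of the two radial functions --- to be essentially the only part of the proposition that is not purely formal.

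For~$(iii)$, write $\varepsilon=\mathrm{id}_{S_1}$ and $\iota=\mathrm{id}_{S_2}$ with $S_1=\operatorname{dom}\varepsilon$, $S_2=\operatorname{dom}\iota$ by~$(i)$. Then $\varepsilon\leqslant\iota$ means $\varepsilon\iota=\iota\varepsilon=\varepsilon$, which by~$(ii)$ amounts to $\mathrm{id}_{S_1\cap S_2}=\mathrm{id}_{S_1}$, that is $S_1\cap S_2=S_1$, that is $S_1\subseteq S_2$ --- exactly $\operatorname{dom}\varepsilon\subseteq\operatorname{dom}\iota$. Finally, for~$(iv)$ I would first observe that the natural partial order of $\mathbf{PStH}_{\mathbb{R}^n}$ is the restriction of that of $\mathscr{I}_{\mathfrak{c}}$, since $\alpha\leqslant\beta$ is equivalent to $\alpha=(\alpha\alpha^{-1})\beta$ and $\alpha\alpha^{-1}=\mathrm{id}_{\operatorname{dom}\alpha}$ belongs to $E(\mathbf{PStH}_{\mathbb{R}^n})$ by~$(i)$. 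It then remains to recall that in $\mathscr{I}_{\mathfrak{c}}$ one has $\alpha\leqslant\beta$ if and only if $\operatorname{dom}\alpha\subseteq\operatorname{dom}\beta$ and $\beta|_{\operatorname{dom}\alpha}=\alpha$, while conversely $\beta|_{\operatorname{dom}\alpha}=\alpha$ yields $\alpha=\mathrm{id}_{\operatorname{dom}\alpha}\,\beta=(\alpha\alpha^{-1})\beta$ with $\alpha\alpha^{-1}\in E(\mathbf{PStH}_{\mathbb{R}^n})$, hence $\alpha\leqslant\beta$; this is the asserted equivalence $\alpha\leqslant\beta\Leftrightarrow\beta|_{\operatorname{dom}\alpha}=\alpha$.
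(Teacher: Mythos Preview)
Your proposal is correct and follows the same route as the paper: the paper simply remarks that parts $(i)$ and $(ii)$ follow from Proposition~\ref{proposition-1.3} together with the description of idempotents of a symmetric inverse monoid in \cite[Proposition~1.1.2]{Lawson1998}, while $(iii)$ and $(iv)$ follow from the definition of the natural partial order and the material in \cite[Section~3]{Lawson1998}. Your write-up spells out the one non-formal point that the paper leaves implicit, namely that $\textbf{St}_0(\mathbb{R}^n)$ is closed under intersection (via $\rho_{S_1\cap S_2}=\min\{\rho_{S_1},\rho_{S_2}\}$); this is a welcome addition but not a different approach.
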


Зауважимо, що пункти $(i)$ і $(ii)$ твердження \ref{proposition-3.1} випливають з того факту, що в симетричному інверсному моноїді ідемпотентами є лише тотожні часткові перетворення (див. \cite[твердження 1.1.2]{Lawson1998}), а пункти $(iii)$ і $(iv)$  випливають з означення природного часткового порядку на $E(\mathbf{PStH}_{\mathbb{R}^n})$ і $\mathbf{PStH}_{\mathbb{R}^n}$, відповідно, та з леми 3 \cite{Lawson1998}.

Якщо $S$ --- напівгрупа, то відношення Ґріна $\mathscr{R}$, $\mathscr{L}$, $\mathscr{J}$, $\mathscr{D}$ і $\mathscr{H}$ на $S$ визначаються так (див. \cite{Green-1951} або \cite[\S2.1]{CliffordPreston1961-1967}):
\begin{align*}
    &\qquad a\mathscr{R}b \mbox{~тоді і лише тоді, коли~} aS^1=bS^1;\\
    &\qquad a\mathscr{L}b \mbox{~тоді і лише тоді, коли~} S^1a=S^1b;\\
    &\qquad a\mathscr{J}b \mbox{~тоді і лише тоді, коли~} S^1aS^1=S^1bS^1;\\
    &\qquad \mathscr{D}=\mathscr{L}\circ\mathscr{R}= \mathscr{R}\circ\mathscr{L};\\
    &\qquad \mathscr{H}=\mathscr{L}\cap\mathscr{R}.
\end{align*}
Напівгрупа $S$ називається \emph{простою}, якщо $S$ не має власного двобічного ідеалу, тобто $S$ має єдиний $\mathscr{J}$-клас, і \emph{біпростою}, якщо $S$ має єдиний $\mathscr{D}$-клас.

Позаяк за твердженням \ref{proposition-1.3}, $\mathbf{PStH}_{\mathbb{R}^n}$ --- інверсна піднапівгрупа симетричного інверсного моноїда $\mathscr{I}_{\mathfrak{c}}$, то з визначень відношень Ґріна на $\mathscr{I}_{\mathfrak{c}}$ і твердження 3.2.11 \cite{Lawson1998} випливає твердження~\ref{proposition-3.2}.

\begin{proposition}\label{proposition-3.2}
Нехай $\alpha,\beta$ -- елементи напівгрупи $\mathbf{PStH}_{\mathbb{R}^n}$. Тоді:
\begin{itemize}
  \item[$(i)$] $\alpha\mathscr{R}\beta$ в $\mathbf{PStH}_{\mathbb{R}^n}$ тоді і лише тоді, коли $\operatorname{ran}\alpha=\operatorname{ran}\beta$;
  \item[$(ii)$] $\alpha\mathscr{L}\beta$ в $\mathbf{PStH}_{\mathbb{R}^n}$ тоді і лише тоді, коли $\operatorname{dom}\alpha=\operatorname{dom}\beta$;
  \item[$(iii)$] $\alpha\mathscr{H}\beta$ в $\mathbf{PStH}_{\mathbb{R}^n}$ тоді і лише тоді, коли $\operatorname{ran}\alpha=\operatorname{ran}\beta$ i $\operatorname{dom}\alpha=\operatorname{dom}\beta$.
\end{itemize}
\end{proposition}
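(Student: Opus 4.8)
The plan is to obtain Proposition~\ref{proposition-3.2} directly from the classical description of Green's relations on the symmetric inverse semigroup, using that $\mathbf{PStH}_{\mathbb{R}^n}$ is an inverse subsemigroup of $\mathscr{I}_{\mathfrak{c}}$ (Proposition~\ref{proposition-1.3}). On $\mathscr{I}_{\mathfrak{c}}$ these relations are well known (see \cite[Theorem~3.2.11]{Lawson1998}): $\alpha\mathscr{R}\beta$ iff $\operatorname{ran}\alpha=\operatorname{ran}\beta$, $\alpha\mathscr{L}\beta$ iff $\operatorname{dom}\alpha=\operatorname{dom}\beta$, and $\mathscr{H}=\mathscr{R}\cap\mathscr{L}$; the whole content of the proof is to transfer this description down to the subsemigroup $\mathbf{PStH}_{\mathbb{R}^n}$.

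The one point that needs care is that Green's relations of a subsemigroup are in general \emph{not} the restrictions of those of the ambient semigroup. What rescues us is that $\mathbf{PStH}_{\mathbb{R}^n}$ is an \emph{inverse} subsemigroup: in any inverse semigroup one has $a\,\mathscr{R}\,b\iff aa^{-1}=bb^{-1}$ and $a\,\mathscr{L}\,b\iff a^{-1}a=b^{-1}b$, and the inverse of an element of an inverse subsemigroup coincides with its inverse in the ambient semigroup. Hence the conditions $aa^{-1}=bb^{-1}$ and $a^{-1}a=b^{-1}b$, and therefore the relations $\mathscr{R}$, $\mathscr{L}$, $\mathscr{H}$ on $\mathbf{PStH}_{\mathbb{R}^n}$, are precisely the restrictions of the corresponding relations on $\mathscr{I}_{\mathfrak{c}}$. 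So I would first record this equivalence (citing \cite{Lawson1998} or \cite{CliffordPreston1961-1967}) and then restrict the three equivalences above, which gives $(i)$--$(iii)$ at once.

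To keep the argument self-contained I would also check the ``if'' directions by hand once. With the paper's composition convention $\alpha\alpha^{-1}$ is the identity map of $\operatorname{ran}\alpha$ and $\alpha^{-1}\alpha$ the identity map of $\operatorname{dom}\alpha$, so by Proposition~\ref{proposition-3.1}$(i)$ the equality $\alpha\alpha^{-1}=\beta\beta^{-1}$ in $\mathbf{PStH}_{\mathbb{R}^n}$ is, verbatim, $\operatorname{ran}\alpha=\operatorname{ran}\beta$, and $\alpha^{-1}\alpha=\beta^{-1}\beta$ is $\operatorname{dom}\alpha=\operatorname{dom}\beta$. Conversely, if $\operatorname{ran}\alpha=\operatorname{ran}\beta$, then a suitable product of $\alpha$, $\beta$ and their inverses again lies in $\mathbf{PStH}_{\mathbb{R}^n}$ (it is closed under the operation and under inversion) and, together with a symmetric companion, witnesses that $\alpha$ and $\beta$ generate the same principal right ideal, i.e. $\alpha\,\mathscr{R}\,\beta$ inside $\mathbf{PStH}_{\mathbb{R}^n}$; the $\mathscr{L}$-case is dual, and $(iii)$ then follows from $\mathscr{H}=\mathscr{R}\cap\mathscr{L}$ with nothing more to prove.

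I do not expect a genuine obstacle here: the statement is essentially a corollary of the inverse-semigroup machinery together with Propositions~\ref{proposition-1.3} and~\ref{proposition-3.1}. The only slightly delicate spot is the subsemigroup-versus-restriction subtlety flagged above (and making sure the composition convention used for $\mathscr{I}_{\mathfrak{c}}$ in \cite[Theorem~3.2.11]{Lawson1998} is matched to the one fixed in the present paper); once those are settled, the proof is routine bookkeeping.
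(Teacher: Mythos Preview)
Your approach is exactly the paper's: the statement is asserted as a direct consequence of Proposition~\ref{proposition-1.3} together with the description of Green's relations on $\mathscr{I}_{\mathfrak{c}}$ in \cite[Proposition~3.2.11]{Lawson1998}, with no further argument. Your additional care about why $\mathscr{R}$, $\mathscr{L}$, $\mathscr{H}$ restrict from $\mathscr{I}_{\mathfrak{c}}$ to an inverse subsemigroup is welcome; just double-check the bookkeeping you flagged---with the paper's left-to-right composition one has $\alpha\alpha^{-1}=\mathrm{id}_{\operatorname{dom}\alpha}$ and $\alpha^{-1}\alpha=\mathrm{id}_{\operatorname{ran}\alpha}$ (cf.\ the proof of Proposition~\ref{proposition-3.3}), not the other way around.
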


\begin{proposition}\label{proposition-3.3}
$\mathbf{PStH}_{\mathbb{R}^n}$ -- біпроста напівгрупа.
\end{proposition}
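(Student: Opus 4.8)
The plan is to use Proposition~\ref{proposition-1.3}, which tells us that $\mathbf{PStH}_{\mathbb{R}^n}$ is an inverse semigroup, together with the description of Green's relations in Proposition~\ref{proposition-3.2}. Being bisimple means having a single $\mathscr{D}$-class, and since $\mathscr{D}=\mathscr{R}\circ\mathscr{L}$, it is enough to show that for arbitrary $\alpha,\beta\in\mathbf{PStH}_{\mathbb{R}^n}$ there is a $\gamma\in\mathbf{PStH}_{\mathbb{R}^n}$ with $\alpha\mathscr{R}\gamma$ and $\gamma\mathscr{L}\beta$.

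First I would fix $\alpha,\beta\in\mathbf{PStH}_{\mathbb{R}^n}$ and set $S_1:=\operatorname{ran}\alpha$ and $S_2:=\operatorname{dom}\beta$. By the definition of a star partial homeomorphism (equivalently, by Proposition~\ref{proposition-3.1}), $S_1$ and $S_2$ are stars with centre $\mathbf{0}$, i.e. $S_1,S_2\in\textbf{St}_0(\mathbb{R}^n)$. By Proposition~\ref{proposition-1.1}, each of $S_1,S_2$ is star homeomorphic to the unit ball $\mathbf{B}_1$. Since the inverse of a star homeomorphism and the composition of two star homeomorphisms are again star homeomorphisms (this is immediate from the definition, namely preservation of $\mathbf{0}$ and of the segments $[\mathbf{0},x]$; cf. Proposition~\ref{proposition-1.0}), composing a star homeomorphism $S_2\to\mathbf{B}_1$ with the inverse of a star homeomorphism $S_1\to\mathbf{B}_1$ produces a star homeomorphism $\gamma\colon S_2\to S_1$. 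Hence $\gamma\in\mathbf{PStH}_{\mathbb{R}^n}$ with $\operatorname{dom}\gamma=S_2$ and $\operatorname{ran}\gamma=S_1$.

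Then Proposition~\ref{proposition-3.2}$(i)$ gives $\alpha\mathscr{R}\gamma$ because $\operatorname{ran}\alpha=S_1=\operatorname{ran}\gamma$, and Proposition~\ref{proposition-3.2}$(ii)$ gives $\gamma\mathscr{L}\beta$ because $\operatorname{dom}\gamma=S_2=\operatorname{dom}\beta$; therefore $(\alpha,\beta)\in\mathscr{R}\circ\mathscr{L}=\mathscr{D}$. As $\alpha$ and $\beta$ were arbitrary, $\mathbf{PStH}_{\mathbb{R}^n}$ consists of a single $\mathscr{D}$-class, i.e. it is bisimple.

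I do not anticipate a real obstacle here: the whole geometric/topological content is already packaged into Proposition~\ref{proposition-1.1} (every star is star homeomorphic to $\mathbf{B}_1$), and the remaining checks — that star homeomorphisms are closed under composition and inversion, and that the domain and range of such a map belong to $\textbf{St}_0(\mathbb{R}^n)$ — are routine. The only mild point of care is the direction of composition in $\mathscr{R}\circ\mathscr{L}$, but this is harmless since $\mathscr{R}\circ\mathscr{L}=\mathscr{L}\circ\mathscr{R}$.
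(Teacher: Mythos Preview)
Your proof is correct and shares the same geometric core as the paper's: both rely on Proposition~\ref{proposition-1.1} to produce a star homeomorphism between any two stars in $\textbf{St}_0(\mathbb{R}^n)$. The difference is only in how the semigroup-theoretic conclusion is drawn. The paper works with \emph{idempotents}: given idempotents $\varepsilon,\iota$ with domains $E,I$, it takes a star homeomorphism $\alpha\colon E\to I$, observes $\alpha\alpha^{-1}=\varepsilon$ and $\alpha^{-1}\alpha=\iota$, and then invokes Munn's criterion \cite[Lemma~1.1]{Munn1966} (an inverse semigroup is bisimple iff any two idempotents are so related). You instead work directly with arbitrary elements $\alpha,\beta$ and use the explicit description of $\mathscr{R}$ and $\mathscr{L}$ from Proposition~\ref{proposition-3.2} to exhibit an intermediary $\gamma$ witnessing $\alpha\mathscr{D}\beta$. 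Your route is a touch more self-contained (no external citation needed beyond what is already in the paper), while the paper's route is marginally shorter; neither has any real advantage over the other.
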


\begin{proof}
Нехай $\varepsilon$ i $\iota$~-- довільні ідемпотенти напівгрупи $\mathbf{PStH}_{\mathbb{R}^n}$. Тоді за твердженням \ref{proposition-3.1}$(i)$ існують зірки  $E,I\in\textbf{St}_0(\mathbb{R}^n)$ такі, що $\varepsilon\colon \operatorname{dom}\varepsilon=E\to \operatorname{ran}\varepsilon=E$ i $\iota\colon \operatorname{dom}\iota=I\to \operatorname{ran}\iota=I$ -- тотожні відображення. За твердженням \ref{proposition-1.1} існує зірковий гомеоморфізм $\alpha\colon E\to I$. Очевидно, що $\alpha\alpha^{-1}=\varepsilon$ i $\alpha^{-1}\alpha=\iota$. Позаяк напівгрупа $\mathbf{PStH}_{\mathbb{R}^n}$ є інверсною, то з леми Манна (див. \cite[лема 1.1]{Munn1966}) випливає, що $\mathbf{PStH}_{\mathbb{R}^n}$ є біпростою напівгрупою.
\end{proof}

Оскільки кожен $\mathscr{D}$-клас елемента $a$ напівгрупи $S$ міститься в його $\mathscr{J}$-класі (див. \cite[\S2.1]{CliffordPreston1961-1967}), то з твердження~\ref{proposition-3.3} випливає наслідок~\ref{corollary-3.4}.

\begin{corollary}\label{corollary-3.4}
$\mathbf{PStH}_{\mathbb{R}^n}$ -- проста напівгрупа.
\end{corollary}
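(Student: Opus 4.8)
The plan is to deduce Corollary~\ref{corollary-3.4} directly from Proposition~\ref{proposition-3.3}, using only the standard inclusion $\mathscr{D}\subseteq\mathscr{J}$ which holds in every semigroup. Recall that, by the definitions fixed above, $\mathbf{PStH}_{\mathbb{R}^n}$ is \emph{simple} precisely when it consists of a single $\mathscr{J}$-class, while Proposition~\ref{proposition-3.3} asserts exactly that it is \emph{bisimple}, i.e.\ consists of a single $\mathscr{D}$-class. So the corollary is a formal consequence of the proposition once the inclusion of Green's relations is in place.

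First I would record the inclusion $\mathscr{D}\subseteq\mathscr{J}$ for $S=\mathbf{PStH}_{\mathbb{R}^n}$: if $\alpha\,\mathscr{D}\,\beta$, pick $\gamma$ with $\alpha\,\mathscr{R}\,\gamma$ and $\gamma\,\mathscr{L}\,\beta$; then $S^1\alpha S^1=S^1\gamma S^1=S^1\beta S^1$, so $\alpha\,\mathscr{J}\,\beta$. Combining this with Proposition~\ref{proposition-3.3}, which gives $\mathscr{D}=S\times S$, yields $\mathscr{J}=S\times S$ as well; hence $\mathbf{PStH}_{\mathbb{R}^n}$ has exactly one $\mathscr{J}$-class and is therefore simple.

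There is essentially no obstacle here: the assertion is a corollary of an already established result, and the only point requiring care is to invoke the correct characterisation of simplicity (a single $\mathscr{J}$-class, equivalently $S^1\alpha S^1=S$ for every $\alpha\in S$). If one preferred a self-contained argument avoiding the reference to Green's relations, one could instead check $S^1\alpha S^1=S$ directly: given $\alpha,\beta\in\mathbf{PStH}_{\mathbb{R}^n}$, use Proposition~\ref{proposition-3.1} to view $\operatorname{dom}\alpha,\operatorname{ran}\alpha,\operatorname{dom}\beta,\operatorname{ran}\beta$ as stars with centre $\mathbf{0}$, use Proposition~\ref{proposition-1.1} to produce star homeomorphisms matching $\operatorname{ran}\beta$ to $\operatorname{dom}\alpha$ and $\operatorname{ran}\alpha$ to $\operatorname{dom}\beta$, and compose these with $\alpha$ to recover $\beta$. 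However, the one-line derivation from Proposition~\ref{proposition-3.3} via $\mathscr{D}\subseteq\mathscr{J}$ is the natural route and the one I would present.
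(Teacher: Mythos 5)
Your derivation is exactly the paper's: the authors also obtain Corollary~\ref{corollary-3.4} from Proposition~\ref{proposition-3.3} by invoking the standard inclusion of each $\mathscr{D}$-class in the corresponding $\mathscr{J}$-class (citing \cite[\S2.1]{CliffordPreston1961-1967}). Your explicit verification of $\mathscr{D}\subseteq\mathscr{J}$ and the alternative direct check of $S^1\alpha S^1=S$ are both correct but not needed beyond the one-line argument.
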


З твердження~\ref{proposition-3.3} і теореми 2.20 \cite{CliffordPreston1961-1967} випливає наслідок~\ref{corollary-3.5}.

\begin{corollary}\label{corollary-3.5}
Довільні дві максимальні підгрупи в $\mathbf{PStH}_{\mathbb{R}^n}$ є ізоморфними. Більше того кожна максимальна підгрупа в $\mathbf{PStH}_{\mathbb{R}^n}$ ізоморфна групі всіх зіркових гомеоморфізмів одиничної кулі $\mathbf{B}_1$ в $\mathbb{R}^n$.
\end{corollary}

\begin{lemma}\label{lemma-3.6}
Нехай $\mathfrak{C}$~--- конгруенція на напівгрупі $\mathbf{PStH}_{\mathbb{R}^n}$, $r_1$, $r_2$~--- довільні різні дійсні до\-дат\-ні числа та $\varepsilon_{r_1}\colon \mathbf{B}_{r_1}\to \mathbf{B}_{r_1}$ i $\varepsilon_{r_2}\colon \mathbf{B}_{r_2}\to \mathbf{B}_{r_2}$~--- тотожні відображення. Якщо $\varepsilon_{r_1}\mathfrak{C}\varepsilon_{r_2}$, то всі ідемпотенти напівгрупи $\mathbf{PStH}_{\mathbb{R}^n}$ є $\mathfrak{C}$-еквівалентними.
\end{lemma}

\begin{proof}
Спочатку доведемо, що для довільного  дійсного додатнього числа $r$ ідемпотент $\varepsilon_r$, де $\varepsilon_r\colon \mathbf{B}_r\to \mathbf{B}_r$~--- тотожне відображення, є $\mathfrak{C}$-еквівалентним ідемпотентам $\varepsilon_{r_1}$ i $\varepsilon_{r_2}$.

Не зменшуючи загальності, можемо вважати, що $r_1<r_2$. Розглянемо можливі випадки:
\begin{equation*}
  a)~r<r_1; \qquad b)~r_1<r<r_2; \qquad \hbox{i} \qquad c)~r_2<r.
\end{equation*}


Припустимо, що виконується випадок $b)~r_1<r<r_2$. Тоді з твердження \ref{proposition-3.1} випливає, що $\varepsilon_{r_1}=\varepsilon_{r_1}\varepsilon_r\mathfrak{C}\varepsilon_{r_2}\varepsilon_r=\varepsilon_r$, а отже, $\varepsilon_{r_1}\mathfrak{C}\varepsilon_r$ i $\varepsilon_r\mathfrak{C}\varepsilon_{r_2}$.

Припустимо, що виконується випадок $a)~r<r_1$. Означимо часткове відобра\-жен\-ня $\alpha\colon \mathbb{R}^n\rightharpoonup \mathbb{R}^n$ наступним чином:
\begin{equation*}
  \operatorname{dom}\alpha=\mathbf{B}_{r_2}, \qquad \operatorname{ran}\alpha=\mathbf{B}_{r_1} \qquad \hbox{i} \qquad (x)\alpha=x\cdot\frac{r_1}{r_2}.
\end{equation*}
Тоді часткове відображення $\alpha$ та обернене до нього $\alpha^{-1}$ є елементами напівгрупи $\mathbf{PStH}_{\mathbb{R}^n}$ й існує натуральне число $n_r$ таке, що $\left(\dfrac{r_1}{r_2}\right)^{n_r}<r$.

Нехай $S=\left\langle\alpha,\alpha^{-1}\right\rangle$~--- піднапівгрупа в $\mathbf{PStH}_{\mathbb{R}^n}$, породжена елементами $\alpha$ i $\alpha^{-1}$. Тоді легко бачити, що
\begin{equation*}
\varepsilon_{r_2}\alpha=\alpha\varepsilon_{r_2}=\alpha, \qquad \varepsilon_{r_2}\alpha^{-1}=\alpha^{-1}\varepsilon_{r_2}=\alpha^{-1}, \qquad \alpha\alpha^{-1}=\varepsilon_{r_2} \quad \hbox{i} \quad \alpha^{-1}\alpha=\varepsilon_{r_1}\neq \varepsilon_{r_2},
\end{equation*}
а отже, за лемою 1.31 з \cite{CliffordPreston1961-1967} напівгрупа $S=\left\langle\alpha,\alpha^{-1}\right\rangle$ ізоморфна біциклічному моноїдові ${\mathscr{C}}(p,q)$, причому всі елементи напівгрупи $S$ єдиним чином зоображаються у вигляді $\left(\alpha^{-1}\right)^i\alpha^j$, де $i$ та $j$ --- невід'ємні цілі числа, а ізоморфізм $\mathfrak{I}\colon S\to {\mathscr{C}}(p,q)$ визначається за формулою $\big(\left(\alpha^{-1}\right)^i\alpha^j\big)\mathfrak{I}=q^ip^j$. За наслідком 1.32 з \cite{CliffordPreston1961-1967} кожна неодинична конгруенція на біциклічному моноїді ${\mathscr{C}}(p,q)$ є груповою, а отже з наших припущень випливає, що усі ідемпотенти напівгрупи $S$ є $\mathfrak{C}$-еквівалентними. Тоді для $r_0=\left(\dfrac{r_1}{r_2}\right)^{n_r}$ маємо, що ідемпотент $\varepsilon_{r_0}$, де $\varepsilon_{r_0}\colon \mathbf{B}_{r_0}\to \mathbf{B}_{r_0}$~--- тотожне відобра\-жен\-ня, є $\mathfrak{C}$-еквівалентним ідемпотентам $\varepsilon_{r_1}$ i $\varepsilon_{r_2}$. Але за побудовою, $\varepsilon_{r_0}\leqslant\varepsilon_{r}\leqslant\varepsilon_{r_1}$ в $E(\mathbf{PStH}_{\mathbb{R}^n})$, а отже, з випадку $b)$ випливає, що $\varepsilon_r\mathfrak{C}\varepsilon_{r_2}$.

Припустимо, що виконується випадок $c)~r_2<r$. Тоді існує натуральне число $n_r$ таке, що $r_2\cdot \left(\dfrac{r_2}{r_1}\right)^{n_r}>r$. Означимо часткове відображення $\beta\colon \mathbb{R}^n\rightharpoonup \mathbb{R}^n$ наступним чином:
\begin{equation*}
  \operatorname{dom}\beta=\mathbf{B}_{r_2\left(\tfrac{r_2}{r_1}\right)^{n_r}}, \qquad \operatorname{ran}\beta=\mathbf{B}_{r_2\left(\tfrac{r_2}{r_1}\right)^{n_r-1}} \qquad \hbox{i} \qquad (x)\beta=x\cdot\frac{r_1}{r_2}.
\end{equation*}
Тоді часткове відображення $\beta$ та обернене до нього $\beta^{-1}$ є елементами напівгрупи $\mathbf{PStH}_{\mathbb{R}^n}$. Нехай $\varepsilon_{1}\colon \mathbf{B}_{r_2\left(\tfrac{r_2}{r_1}\right)^{n_r}}\to \mathbf{B}_{r_2\left(\tfrac{r_2}{r_1}\right)^{n_r}}$ і $\varepsilon_{2}\colon \mathbf{B}_{r_2\left(\tfrac{r_2}{r_1}\right)^{n_r-1}}\to \mathbf{B}_{r_2\left(\tfrac{r_2}{r_1}\right)^{n_r-1}}$~--- то\-тож\-ні відображення. Тоді очевидно, що $\varepsilon_{1}$ і  $\varepsilon_{2}$ є різними ідемпотентами напівгрупи $\mathbf{PStH}_{\mathbb{R}^n}$, причому $\varepsilon_2\leqslant\varepsilon_1$. Також легко бачити, що виконуються такі рівності
\begin{equation*}
\varepsilon_1\beta=\beta\varepsilon_1=\beta, \qquad \varepsilon_1\beta^{-1}=\beta^{-1}\varepsilon_1=\beta^{-1}, \qquad \beta\beta^{-1}=\varepsilon_1 \qquad \hbox{i} \qquad \beta^{-1}\beta=\varepsilon_2\neq \varepsilon_1,
\end{equation*}
а отже, за лемою 1.31 з \cite{CliffordPreston1961-1967} піднапівгрупа $T=\left\langle\beta,\beta^{-1}\right\rangle$ в $\mathbf{PStH}_{\mathbb{R}^n}$, породжена елементами $\beta$ i $\beta^{-1}$, ізоморфна біциклічному моноїдові ${\mathscr{C}}(p,q)$, причому всі елементи напівгрупи $T$ єдиним чином зоображаються у вигляді $\left(\beta^{-1}\right)^i\beta^j$, де $i$ та $j$ --- невід'ємні цілі числа, а ізоморфізм $\mathfrak{I}\colon T\to {\mathscr{C}}(p,q)$ визначається за формулою $\big(\left(\beta^{-1}\right)^i\beta^j\big)\mathfrak{I}=q^ip^j$. Очевидно, що ідемпотенти $\left(\beta^{-1}\right)^{n_r}\beta^{n_r}$ і $\left(\beta^{-1}\right)^{n_r+1}\beta^{n_r+1}$ напівгрупи  $\mathbf{PStH}_{\mathbb{R}^n}$, як часткові відображення, є тотожними відображеннями куль $\mathbf{B}_{r_2}$ i $\mathbf{B}_{r_1}$, відповідно, а отже, $\left(\beta^{-1}\right)^{n_r}\beta^{n_r}=\varepsilon_{r_2}$ і $\left(\beta^{-1}\right)^{n_r+1}\beta^{n_r+1}=\varepsilon_{r_1}$. Отож, два різні ідемпотенти напівгрупи $T$ є $\mathfrak{C}$-еквівалентними. За наслідком 1.32 з \cite{CliffordPreston1961-1967} кожна не\-тотожня конгруенція на бі\-цик\-ліч\-ному моноїді ${\mathscr{C}}(p,q)$ є груповою, а отже, з наших припущень випливає, що усі ідемпотенти напівгрупи $S$ є $\mathfrak{C}$-еквівалентними. Але за побудовою, $\varepsilon_{r_2}\leqslant\varepsilon_{r}\leqslant\varepsilon_{1}$ в $E(\mathbf{PStH}_{\mathbb{R}^n})$, а отже, з випадку $b)$ випливає, що $\varepsilon_r\mathfrak{C}\varepsilon_{r_2}$.
\end{proof}

\begin{lemma}\label{lemma-3.7}
Нехай $\mathfrak{C}$~--- конгруенція на напівгрупі $\mathbf{PStH}_{\mathbb{R}^n}$ така, що два різні ідемпотенти напівгрупи $\mathbf{PStH}_{\mathbb{R}^n}$ є $\mathfrak{C}$-еквівалентними. Тоді всі ідемпотенти напівгрупи $\mathbf{PStH}_{\mathbb{R}^n}$ є $\mathfrak{C}$-еквіва\-лент\-ни\-ми.
\end{lemma}

\begin{proof}
Припустимо, що $\varepsilon$ i $\iota$~--- різні $\mathfrak{C}$-еквівалентні ідемпотенти напівгрупи $\mathbf{PStH}_{\mathbb{R}^n}$. Тоді з $\varepsilon\mathfrak{C}\iota$ випливає, що $\varepsilon\iota\mathfrak{C}\iota\iota=\iota$, а отже, не зменшуючи загальності, можемо вважати, що $\varepsilon\leqslant\iota$ в $E(\mathbf{PStH}_{\mathbb{R}^n})$, а тоді за твердженням \ref{proposition-3.1}$(iii)$, $\operatorname{dom}\varepsilon\subseteq\operatorname{dom}\iota$.

Нехай $\alpha_\iota\colon \operatorname{dom}\iota\to \mathbf{B}_1$ --- частковий зірковий гомеоморфізм зірки $\operatorname{dom}\iota$ на одиничну кулю $\mathbf{B}_1$ в початку $\mathbf{0}$. За твердженням \ref{proposition-1.0} звуження $\alpha_\iota|_{\operatorname{dom}\varepsilon}\colon \operatorname{dom}\varepsilon\to (\operatorname{dom}\varepsilon)\alpha_\iota$ є частковим зірковим гомеоморфізмом зірки $\operatorname{dom}\varepsilon$ на зірку $(\operatorname{dom}\varepsilon)\alpha_\iota$. Тоді $\alpha_\iota^{-1}\alpha_\iota\colon \mathbf{B}_1\to \mathbf{B}_1$ --- тотожне відображення одиничної кулі $\mathbf{B}_1$ в початку $\mathbf{0}$. Позаяк
$
  \alpha_\iota^{-1}\alpha_\iota=\alpha_\iota^{-1}\iota\alpha_\iota\mathfrak{C}\alpha_\iota^{-1}\varepsilon\alpha_\iota
$
i $\alpha_\iota^{-1}\alpha_\iota\neq\alpha_\iota^{-1}\varepsilon\alpha_\iota$, то ідемпотент $\varepsilon_1$, $\varepsilon_1\colon \mathbf{B}_1\to \mathbf{B}_1$ --- тотожне відображення одиничної кулі $\mathbf{B}_1$ в початку $\mathbf{0}$, є $\mathfrak{C}$-еквівалентним деякому ідемпотентові $\varepsilon_s$ такому, що $\operatorname{dom}\varepsilon_s$ --- власна підмножина в $\mathbf{B}_1$. Тоді існує елемент $u_0\in \mathbb{S}^{n-1}$ такий, що $(u_0)\rho_{\operatorname{dom}\varepsilon_s}<1$. Позаяк радіальна функція $\rho_{\operatorname{dom}\varepsilon_s}\colon \mathbb{S}^{n-1}\to \mathbb{R}$ є неперервною, то існує відкритий окіл $U(u_0)$ точки $u_0$ на сфері $\mathbb{S}^{n-1}$ такий, що $(u)\rho_{\operatorname{dom}\varepsilon_s}<1$ для всіх $u\in U(u_0)$.

Для довільної точки $x\in \mathbb{S}^{n-1}\setminus U(u_0)$ через $\alpha_x\colon \mathbf{B}_1\to \mathbf{B}_1$ позначимо ортогональне перетворення одиничної кулі $\mathbf{B}_1$, яке відображає точку $u_0\in \mathbb{S}^{n-1}$ в $x\in \mathbb{S}^{n-1}$. Очевидно, що $\alpha_x\in \mathbf{PStH}_{\mathbb{R}^n}$, оскільки відображення $\alpha_x\colon \mathbf{B}_1\to \mathbf{B}_1$ є гомеоморфізмом, як елемент ортогональної групи матриць $O(n, \mathbb{R})$ (див. \cite{Kostrykin-Manin-1986}). Позаяк підпростір $\mathbb{S}^{n-1}\setminus U(u_0)$ в $\mathbb{S}^{n-1}$ є компактним, то відкрите покриття $\left\{(U(u_0))\alpha_x\colon x\in \mathbb{S}^{n-1}\setminus U(u_0)\right\}$  простору $\mathbb{S}^{n-1}\setminus U(u_0)$ містить скінченне підпокриття $\left\{(U(u_0))\alpha_{x_1},\ldots, (U(u_0))\alpha_{x_k}\right\}$. Позаяк $\varepsilon_1\mathfrak{C}\varepsilon_s$ i $\alpha_{x_i}^{-1}\varepsilon_1\alpha_{x_i}=\varepsilon_1$ для всіх $i=1,\ldots,k$, то $\varepsilon_1\mathfrak{C}\alpha_{x_i}^{-1}\varepsilon_s\alpha_{x_i}$ для кожного $i=1,\ldots,k$, а отже,
\begin{equation*}
  \varepsilon_1\mathfrak{C}\alpha_{x_1}^{-1}\varepsilon_s\alpha_{x_1}\cdots \alpha_{x_k}^{-1}\varepsilon_s\alpha_{x_k}.
\end{equation*}
Очевидно, що елемент $\alpha_{x_i}^{-1}\varepsilon_s\alpha_{x_i}$ є ідемпотентом напівгрупи $\mathbf{PStH}_{\mathbb{R}^n}$ для кожного $i=1,\ldots,k$, а отже, $\phi=\varepsilon_s\alpha_{x_1}^{-1}\varepsilon_s\alpha_{x_1}\cdots \alpha_{x_k}^{-1}\varepsilon_s\alpha_{x_k}$ є також ідемпотентом в $\mathbf{PStH}_{\mathbb{R}^n}$, оскільки $\mathbf{PStH}_{\mathbb{R}^n}$ --- інверсна напівгрупа, а ідемпотенти в інверсній напівгрупі комутують (див. \cite[теорема 1.17]{CliffordPreston1961-1967}). За побудовою, $(x)\rho_{\operatorname{dom}\phi}<1$ для довільного $x\in \mathbb{S}^{n-1}$, а оскільки радіальна функція $\rho_{\operatorname{dom}\phi}\colon \mathbb{S}^{n-1}\to \mathbb{R}$ є неперервною, то з компактності одиничної сфери $\mathbb{S}^{n-1}$ випливає, що відображення $\rho_{\operatorname{dom}\phi}$ на $\mathbb{S}^{n-1}$ набуває свого найбільшого значення. Нехай $R_{\phi}=\max\left\{(x)\rho_{\operatorname{dom}\phi}\colon x\in \mathbb{S}^{n-1}\right\}$ i $\varepsilon_{R_{\phi}}\colon \mathbf{B}_{\phi}\to \mathbf{B}_{\phi}$ --- то\-тож\-не відображення кулі радіуса $R_{\phi}$ в початку $\mathbf{0}$. Легко бачити, що $\varepsilon_{R_{\phi}}\in E(\mathbf{PStH}_{\mathbb{R}^n})$, $\varepsilon_{R_{\phi}}\phi=\phi$ i $\varepsilon_{R_{\phi}}\varepsilon_1=\varepsilon_{R_{\phi}}$. Тоді з умови $\varepsilon_1\mathfrak{C}\phi$ випливає, що $\varepsilon_{R_{\phi}}=\varepsilon_{R_{\phi}}\varepsilon_1\mathfrak{C}\varepsilon_{R_{\phi}}\phi=\phi$, а отже $\varepsilon_1\mathfrak{C}\varepsilon_{R_{\phi}}$. Далі скористаємося лемою \ref{lemma-3.6}.
\end{proof}

\begin{theorem}\label{theorem-3.8}
Кожна неодинична конгруенція на напівгрупі $\mathbf{PStH}_{\mathbb{R}^n}$ є груповою.
\end{theorem}

\begin{proof}
Нехай $\mathfrak{C}$~--- неодинична конгруенція на напівгрупі $\mathbf{PStH}_{\mathbb{R}^n}$. Тоді існують два різні $\mathfrak{C}$-еквівалентні елементи $\alpha$ i $\beta$ напівгрупи $\mathbf{PStH}_{\mathbb{R}^n}$.

Розглянемо можливі випадки:
\begin{enumerate}
  \item[(1)] елементи $\alpha$ i $\beta$ не належать одному $\mathscr{H}$-класу;
  \item[(2)] $\alpha\mathscr{H}\beta$.
\end{enumerate}

Припустимо, що виконується випадок (1). За твердженням 2.3.4 з \cite{Lawson1998}, $\alpha\alpha^{-1}\mathfrak{C}\beta\beta^{-1}$ i $\alpha^{-1}\alpha\mathfrak{C}\beta^{-1}\beta$.  З твердження \ref{proposition-3.2} випливає, що $\operatorname{ran}\alpha\neq\operatorname{ran}\beta$ або $\operatorname{dom}\alpha\neq\operatorname{dom}\beta$, а отже, за твердженням \ref{proposition-3.1}$(i)$ існують два різні $\mathfrak{C}$-еквівалентні ідемпотенти напівгрупи $\mathbf{PStH}_{\mathbb{R}^n}$. Далі скористаємося лемою \ref{lemma-3.7} і лемою I.7.10 з \cite{Petrich1984}.

Припустимо, що $\alpha\mathscr{H}\beta$. За теоремою 2.20 з \cite{CliffordPreston1961-1967} і наслідком \ref{corollary-3.5}, не зменшуючи загальності, можемо вважати, що $\alpha$ i $\beta$ --- зіркові гомеоморфізми одиничної кулі $\mathbf{B}_1$ в початку $\mathbf{0}$. Позаяк $\alpha\alpha^{-1}\mathfrak{C}\alpha\beta^{-1}$, то з вище сказаного випливає, що існує зірковий гомеоморфізм $\gamma\in \mathbf{PStH}_{\mathbb{R}^n}$ одиничної кулі $\mathbf{B}_1$, який є $\mathfrak{C}$-еквівалентним її тотожному відображенню $\varepsilon_1$ такий, що $\gamma\neq\varepsilon_1$. Тоді $(x)\gamma\neq x$ для деякого $x\in \mathbf{B}_1$.

Тоді виконується хоча б одна з умов:
\begin{enumerate}
  \item[$(a)$] існує елемент $x\in \mathbb{S}^{n-1}$ такий, що $([\mathbf{0},x])\gamma= [\mathbf{0},x]$ та існує $y\in[\mathbf{0},x]$ такий, що $(y)\gamma\neq y$;
  \item[$(b)$] існує елемент $x\in \mathbb{S}^{n-1}$ такий, що $([\mathbf{0},x])\gamma\neq [\mathbf{0},x]$.
\end{enumerate}

Припустимо, що виконується умова $(a)$. Припустимо, що $[\mathbf{0},y]\subsetneqq[\mathbf{0},(y)\gamma]$. У випадку $[\mathbf{0},y]\supsetneqq[\mathbf{0},(y)\gamma]$ міркування аналогічні.

Нехай $\mathbf{B}_y$~--- максимальна куля в початку $\mathbf{0}$, що містить точку $y$ і $\varepsilon_y\colon \mathbf{B}_y\to \mathbf{B}_y$~--- тотожне відображення кулі $\mathbf{B}_y$. Тоді $(y)\gamma\notin B_y$. Позаяк $\varepsilon_1\mathfrak{C}\gamma$, то $\varepsilon_y\varepsilon_1\mathfrak{C}\varepsilon_y\gamma$. Використавши твердження 2.3.4 з \cite{Lawson1998}, отримуємо, що
\begin{equation*}
\varepsilon_y=\varepsilon_y\varepsilon_1=
\varepsilon_1^{-1}\varepsilon_y^{-1}\varepsilon_y\varepsilon_1\mathfrak{C}\gamma^{-1}\varepsilon_y^{-1}\varepsilon_y\gamma=\gamma^{-1}\varepsilon_y\gamma.
\end{equation*}
Очевидно, що $\gamma^{-1}\varepsilon_y\gamma$~--- ідемпотент напівгрупи $\mathbf{PStH}_{\mathbb{R}^n}$, причому $(y)\gamma{\in} \operatorname{dom}(\gamma^{-1}\varepsilon_y\gamma)$, але $(y)\gamma\notin\operatorname{dom}\varepsilon_y$, а отже, за твердженням \ref{proposition-3.1}$(i)$ існують два різні $\mathfrak{C}$-еквівалентні ідемпотенти напівгрупи $\mathbf{PStH}_{\mathbb{R}^n}$. Далі скористаємося лемою \ref{lemma-3.7} і лемою I.7.10 з \cite{Petrich1984}.

Припустимо, що виконується умова $(b)$. Позаяк $\gamma$~--- частковий зірковий гомеоморфізм, то $(x)\gamma\neq x$. Існує відкрита $\delta$-куля $U_\delta((x)\gamma)$ точки $(x)\gamma$ в просторі $\mathbb{R}^n$, що не містить точку $x$. З метризовності простору $\mathbb{S}^{n-1}$ випливає, що він є цілком регулярним, а отже, існує неперервне відоб\-ра\-жен\-ня $f\colon \mathbb{S}^{n-1}\to [0,1]$ таке, що $((x)\gamma)f=1$ i $(z)f=0$ для всіх $z\in \mathbb{S}^{n-1}\setminus U_\delta((x)\gamma)$.

Визначимо зірку $L_\gamma$ в початку $\mathbf{0}$ наступним чином. Радіальною функцією зірки $L_\gamma$ є відображення $\rho_{L_\gamma}\colon \mathbb{S}^{n-1}\to \mathbb{R}^n$, яке визначається за формулою
 \begin{equation*}
 (z)\rho_{L_\gamma}=z\cdot (1+(z)f).
\end{equation*}
Очевидно, що так означене відображення $\rho_{L_\gamma}\colon \mathbb{S}^{n-1}\to \mathbb{R}^n$ є неперервним, а отже, відображення $\beta_{L_\gamma}\colon\mathbf{B}_1\to {L_\gamma}$, означене за формулою
\begin{equation*}
(z)\beta_{L_\gamma}=
\left\{
  \begin{array}{ll}
    \mathbf{0}, & \hbox{якщо~} z=\mathbf{0}; \\
    z\cdot ((z)r)\rho_{L_\gamma}, & \hbox{в іншому випадку},
  \end{array}
\right.
\end{equation*}
де $(z)r$~--- точка на одиничній сфері $\mathbb{S}^{n-1}$ така, що $z\in [\mathbf{0},(z)r]$, є бієктивним і неперервним, а оскільки $\mathbf{B}_1$~--- компактний підпростір в $\mathbb{R}^n$, то $\beta_{L_\gamma}$ є зірковим гомеоморфіз\-мом.

Позаяк $\varepsilon_1\mathfrak{C}\gamma$, то $\varepsilon_1\beta_{L_\gamma}\mathfrak{C}\gamma\beta_{L_\gamma}$.
За твердженням 2.3.4 з \cite{Lawson1998} маємо, що
\begin{equation*}
(\varepsilon_1\beta_{L_\gamma})^{-1}(\varepsilon_1\beta_{L_\gamma})\mathfrak{C}(\gamma\beta_{L_\gamma})^{-1}(\gamma\beta_{L_\gamma}),
\end{equation*}
а отже,
\begin{equation*}
\begin{split}
  \beta_{L_\gamma}^{-1}\beta_{L_\gamma} & = \beta_{L_\gamma}^{-1}\varepsilon_1\beta_{L_\gamma}= \\
    & =\beta_{L_\gamma}^{-1}\varepsilon_1^{-1}\varepsilon_1\beta_{L_\gamma}=\\
    & =(\varepsilon_1\beta_{L_\gamma})^{-1}(\varepsilon_1\beta_{L_\gamma})\mathfrak{C}(\gamma\beta_{L_\gamma})^{-1}(\gamma\beta_{L_\gamma})=\\
    & =\beta_{L_\gamma}^{-1}\gamma^{-1}\gamma\beta_{L_\gamma}.
\end{split}
\end{equation*}
Очевидно, що елементи $\beta_{L_\gamma}^{-1}\beta_{L_\gamma}$ i $\beta_{L_\gamma}^{-1}\gamma^{-1}\gamma\beta_{L_\gamma}$ є ідемпотентами напівгрупи $\mathbf{PStH}_{\mathbb{R}^n}$. Зауважимо, що  $(x)\rho_{L_\gamma}=x\cdot (1+(x)f)=x\cdot (1+0)=x$ і
\begin{equation*}
((x)\gamma)\rho_{L_\gamma}=(x)\gamma\cdot (1+((x)\gamma)f)=(x)\gamma\cdot (1+1)=(x)\gamma\cdot 2,
\end{equation*}
а отже, маємо, що $\beta_{L_\gamma}^{-1}\beta_{L_\gamma}\neq\beta_{L_\gamma}^{-1}\gamma^{-1}\gamma\beta_{L_\gamma}$. За твердженням \ref{proposition-3.1}$(i)$ існують два різні $\mathfrak{C}$-еквівалентні ідемпотенти напівгрупи $\mathbf{PStH}_{\mathbb{R}^n}$. Далі скористаємося лемою \ref{lemma-3.7} і лемою I.7.10 з \cite{Petrich1984}.
\end{proof}

\bigskip

Автори висловлюють подяку рецензентові за суттєві зауваження та поради.
\medskip

\vskip1cm

\begin{thebibliography}{11}
 \bibitem{Beklaryan-2004}
Л. А. Бекларян,
\emph{Группы гомеоморфизмов прямой и окружности. Топологические ха\-рак\-теристики и метри\-чес\-кие инварианты},
УМН  \textbf{59}  (2004), no. 4(358), 3--68. 
DOI: 10.4213/rm758;
\textbf{English version}:
L. A. Beklaryan,
\emph{Groups of homeomorphisms of the line and the circle. Topological characteristics and metric invariants},
Russian Math. Sur\-veys \textbf{59} (2004), no. 4,  599--660.	
DOI: 10.1070/RM2004v059n04ABEH000758
%
\bibitem{Beklaryan-2015}
Л. А. Бекларян,
\emph{Группы гомеоморфизмов прямой и окружности. Метрические инварианты и вопросы классификации},
УМН, \textbf{70} (2015), no. 2(422), 3--54.  
DOI: 10.4213/rm9654;
\textbf{English version}:
L.~A.~Beklaryan,
\emph{Groups of line and circle ho\-me\-o\-mor\-phisms. Metric invariants and questions of classification},
Russian Math. Surveys  \textbf{70} (2015), no. 2, 203--248.
DOI: 10.1070/RM2015v070n02ABEH004946
%
\bibitem{Wagner-1952}
В. В. Вагнер,
\emph{К теорнн частичных преобразований},
ДАН СССР  \textbf{84} (1952), 653--656.
%
\bibitem{Wagner-1952a}
В. В. Вагнер,
\emph{Обобшенные группы},
ДАН СССР  \textbf{84} (1952), 1119–1122.
%
\bibitem{Gluskin1959}
Л.~M.~Глускин, \emph{{Полугруппа гомеоморфных отображений отрезка}},
Матем. сб. \textbf{49} (1959), no. 1(91), 13--28.
%
\bibitem{Gluskin1959a}
Л.~M.~Глускин,
\emph{Полугруппы топологических отображений},
ДАН СССР \textbf{125} (1959), 699--702.
%
\bibitem{Gluskin1959b}
Л.~M.~Глускин,
\emph{Транзитивные полугруппы преобразований},
ДАН СССР \textbf{129} (1959), 16--18.
%
\bibitem{Gluskin1959c}
Л.~M.~Глускин,
\emph{Идеалы полугрупп преобразований},
Матем. сб. \textbf{47} (1959), no. 1(89), 111--130.
%
\bibitem{Gluskin1960}
Л.~М.~Глускін,
\emph{Про одну півгруппу непреривиих функцій},
Доповіді АН УРСР \textbf{5} (1960), 582--585.
%
\bibitem{Gluskin1960a}
Л.~М.~Глускин,
\emph{Полугруппы топологических преобразований},
Изв. вузов. Матем. (1963), no. 1, 54--65.
%
\bibitem{Gutik-Melnyk-2015}
О. Гутік, К. Мельник,
\emph{Напiвгрупа монотонних ко-скiнченних часткових гомеомор\-фiз\-мiв дiйсної прямої},
Мат. вісник Наук. тов. ім. T. Шевченка \textbf{12} (2015), 24--40.
%
\bibitem{Insaridze}
Х. Н. Инасаридзе,
 \emph{О простых полугруппах},
Матем. сб. \textbf{57} (1962), no. 2(99), 225--232.
%
\bibitem{Kostrykin-Manin-1986}
А. И. Кострикин, Ю. И. Манин,
\emph{Линейная алгебра и геометрия}.
Учебное пособие для вузов.  2-е изд., перераб.  Наука, Москва, 1986.
%
\bibitem{Shneperman1962}
Л.~Б.~Шнеперман,
\emph{Полугруппы непрерывных преобразований},
ДАН СССР \textbf{144} (1962), no. 3, 509--511.
%
\bibitem{Shneperman1962a}
Л.~Б.~Шнеперман,
\emph{Полугруппы непрерывных преобразований и гомеоморфизмов прос\-той дуги},
ДАН СССР \textbf{146} (1962), 1301--1304.
%
\bibitem{Shneperman1963}
Л.~Б.~Шнеперман,
\emph{Полугруппы непрерывных преобразований метрических прост\-ранств},
Матем. сб. \textbf{61} (1963), no. 3(103), 306--318.
%
\bibitem{Shneperman1965}
Л.~Б.~Шнеперман,
\emph{Полугруппы непрерывных преобразований замкнутых
множеств числовой прямой},
Изв. вузов. Матем.  (1965), no. 6,  166--175.
%
\bibitem{Shneperman1965a}
Л.~Б.~Шнеперман,
\emph{Полугруппы непрерывных преобразований топологических прост\-ранств},
Сиб. матем. журн. \textbf{4} (1965), no. 1, 221--229.
%
\bibitem{Shneperman1966}
Л.~Б.~Шнеперман,
\emph{Полугруппа гомеоморфизмов простой дуги},
Изв. вузов. Матем.  (1966), no. 2,  127--136.
%
\bibitem{Shutov1963a}
Э. Г. Шутов,
\emph{О гомоморфизмах некоторых полугрупп непрерывнвх функций},
Сиб. матем. журн. \textbf{4} (1963), no. 3, 695--701.
%
\bibitem{Shutov1963b}
Э. Г. Шутов,
\emph{О гомоморфизмах некоторых полугрупп непрерывных монотонных функ\-ций},
Сиб. матем. журн. \textbf{4}:4 (1963), 944--950.
%
\bibitem{Andersen-1952}
O. Andersen,
\emph{Ein Bericht uber die Struk\-tur abstrakter Halbgruppen},
PhD Thesis. Ham\-burg, 1952.
%
\bibitem{Anderson1958}
R. D. Anderson,
\emph{The algebraic simplicity of certain groups of homeomorphisms},
Amer. J. Math. \textbf{80} (1958), no. 4, 955---963.
DOI: 10.2307/2372842
%
\bibitem{Cezus}
F.~A.~Cezus,
\emph{Green's  relations  in  semigroups of  functions},
 Ph.D. Thesis, Australian  National University, Canberra,
Australia, 1972.
%
\bibitem{Chuchman2011ADM}
I.~Chuchman,
\emph{On a semigroup of closed connected partial homeomorphisms of the unit in\-ter\-val with a fixed point},
Algebra Discr. Math. \textbf{12} (2011), no. 2, 38--52.
%
\bibitem{CliffordPreston1961-1967}
A.~H.~Clifford and G.~B.~Preston,
\emph{The Algebraic theory of semigroups}, Vols. I and II,
Amer. Math. Soc. Surveys {\bf 7}, Providence, R.I.,  1961 and  1967.
%
\bibitem{Engelking1989}
R.~Engelking,
\emph{General topology}, 2nd ed., Heldermann, Berlin, 1989.
%
\bibitem{Gardner-Volcic-1994}
R. J. Gardner and A. Vol\v{c}i\v{c},
\emph{Tomography of convex and star bodies},
Adv. Math. \textbf{108} (1994), no. 2, 367--399.
DOI: 10.1006/aima.1994.1075
%
\bibitem{Gluskin1977a}
L.~M.~Gluskin, B.~M.~Schein, L.~B. \v{S}neperman, and
I.~S.~Yaroker,
\emph{Addendum to a sur\-vey of semigroups of continuous
self-maps},
Semigroup Forum \textbf{14} (1977), no. 1, 95--125. 
DOI: 10.1007/BF02194658
%
\bibitem{Green-1951}
J. A. Green,
\emph{On the structure of semigroups},
Ann. Math. Ser. 2 \textbf{54} (1951), no. 1,  163--172.
DOI: 10.2307/1969317
%
\bibitem{Jarnik}
 V. Jarn\'{\i}k  et V. Knichal,
\emph{Sur  l'approximation  des  fonctions continues  par  les  superpositlons  de deux  fonctlons},
Fund.  Math. \textbf{24} (1935), no. 1, 206--208.
DOI: 10.4064/fm-24-1-206-208
%
\bibitem{Katok-Hasselblatt-1995}
A. Katok and B. Hasselblatt,
\emph{Introduction to the modern theory of dynamical systems},
Encyclopedia of Mathematics and its Applications, vol. 54, Cambridge University Press, Cambridge, 1995.
%
\bibitem{Lawson1998}
M.~V. Lawson,
\emph{Inverse semigroups. The theory of partial symmetries},
World Scientific, Sin\-ga\-pore, 1998.

\bibitem{Magill1975}
K.~D.~Magill, jr.,
\emph{A survey of semigroups of continuous selfmaps},
Semigroup Forum \textbf{11} (1975/1976), no. 1,  189--282.
DOI: 10.1007/BF02195270
%
\bibitem{Mioduszewski}
J. V. Mioduszewski,
\emph{On a  quasi-ordering  in the class  of
continuous mappings of a  closed  interval  into itself},
Colloq. Math. \textbf{9} (1962), no. 2, 233--240.
DOI: 10.4064/cm-9-2-233-240
%
\bibitem{Moszynska-20055}
M. Moszy\'{n}ska,
\emph{Selected topics in convex geometry},
Birkh\"{a}user, Basel, 2005.
%
\bibitem{Munn1966}
W.~D.~Munn,
\emph{Uniform semilattices and bisimple inverse semigroups},
Quart. J. Math.  \textbf{17}  (1966), no. 1, 151--159.
DOI: 10.1093/qmath/17.1.151
%
\bibitem{Reilly}
S.~B.~O'Reilly,
\emph{The characteristic semigroup of topological space},
General Topology Appl. \textbf{5} (1975), no. 2,  95--106.
DOI: 10.1016/0016-660X(75)90015-X
%
\bibitem{Petrich1984}
M.~Petrich,
\emph{Inverse semigroups},
John Wiley $\&$ Sons, New York, 1984.
%
\bibitem{Rosicky1974}
J. V. Rosick\'{y},
\emph{Remarks  on  topologies  uniquely determined  by
their  continuous  self maps},
Czech. Math. J. \textbf{24}(\textbf{99}) (1974), no. 3, 373--377.
%
\bibitem{Rosicky1974a}
J. V. Rosick\'{y},
\emph{The topology of  the  unit  interval is  not uniquely
determined  by its  continuous self maps  among  set  systems},
Colloq.  Math. \textbf{31} (1974), no. 2, 179--188. 
DOI: 10.4064/cm-31-2-179-188
%
\bibitem{Warndof}
J. C. Warndof,
\emph{Topologies  uniquely  determined by their continuous selfmaps},
Fund. Math. \textbf{66}  (1970), no. 1,  25--43.
DOI: 10.4064/fm-66-1-25-43

 \end{thebibliography}
\end{document}